\definecolor{darkblue}{rgb}{0.0,0.0,0.6}
\newtheorem{assumption}{Assumption}
\newtheorem{definition}{Definition}
\newtheorem{lem}{Lemma}
\newtheorem{rem}{Remark}
\newtheorem{prop}{Proposition}
\newtheorem{theorem}{Theorem}
\newcommand{\setassumptiontag}[1]{
  \let\oldtheassumption\theassumption
  \renewcommand{\theassumption}{#1}
  \g@addto@macro\endassumption{
    \addtocounter{assumption}{-1}
    \global\let\theassumption\oldtheassumption}
  }
\newcommand{\E}{\mathbb{E}}
\newcommand{\R}{\mathrm{Re}}
\def\blab{\boldsymbol{\lambda}}
\def\R{\mathbb{R}}
\def\zb{\mathbf{z}}
\def\bx{\boldsymbol{x}}
\def\eb{\mathbf{e}}
\def\nb{\mathbf{n}}
\def\pb{\mathbf{p}}
\def\bb{\mathbf{b}}
\def\yb{\mathbf{y}}
\def\rb{\boldsymbol{r}}
\def\lb{\boldsymbol{l}}
\def\R{\mathbb{R}}
\def\zb{\mathbf{z}}
\def\xb{\boldsymbol{x}}
\def\eb{\mathbf{e}}
\def\pb{\mathbf{p}}
\def\qb{\mathbf{q}}
\def\bb{\boldsymbol{b}}
\def\Mb{\boldsymbol{M}}
\def\BibTeX{{\rm B\kern-.05em{\sc i\kern-.025em b}\kern-.08em
		T\kern-.1667em\lower.7ex\hbox{E}\kern-.125emX}}
	\title{Learning Generalized Nash Equilibria in Non-Monotone Games with Quadratic Costs}
	\author{Tatiana Tatarenko, Lucas Wey Hacker
		\thanks{The authors are with the Department of Control Theory and Intelligent Systems, TU Darmstadt, Germany. E-mails: {\tt tatiana.tatarenko@tu-darmstadt.de},
{\tt lucas.hacker@tu-darmstadt.de}.}}
\begin{document}

\maketitle
\thispagestyle{empty}
\pagestyle{empty}

\begin{abstract}
We study generalized Nash equilibrium (GNE) problems in games with quadratic costs and individual linear equality constraints. Departing from approaches that require strong monotonicity and/or shared constraints, we reformulate the KKT conditions of the (generally non-monotone) games into a tractable convex program whose objective satisfies the Polyak–Łojasiewicz (PL) condition. This PL geometry enables a distributed gradient method over a fixed communication graph with global geometric (linear) convergence to a GNE. When gradient information is unavailable or costly, we further develop a zero-order fully distributed scheme in which each player uses only local cost evaluations and their own constraint residuals. With an appropriate step size policy, the proposed zero-order method converges to a GNE, provided one exists, at rate 
$O(1/t)$.
\end{abstract}

\section{Introduction}\label{sec:intro}
Generalized Nash Equilibrium (GNE) problems, where players' strategy sets are coupled, arise ubiquitously in diverse fields ranging from electricity markets to network games~\cite{hobbs2000strategic, scutari2012real}. The challenge lies in finding solutions in these games, especially when players have limited information or computational resources. 

Recent work has focused on developing iterative algorithms for learning GNE, often relying on strong assumptions like monotonicity of the game mapping~\cite{LinEqGram, jordan2023first, meng2023linear, TatKam2025arxiv, suli}. However, many real-world GNE problems lack 
such properties, demanding more sophisticated  solution techniques.
Furthermore, a significant portion of the existing literature on GNE focuses on games with shared constraints, where all players are subject to the same set of constraints~\cite{LinEqGram, meng2023linear, TatKam2025arxiv, suli}. 

Motivated by these limitations, our work addresses GNE problems with non-monotone game mappings and individual coupling constraints. This allows for modeling scenarios where players face distinct limitations or requirements, reflecting the heterogeneity often present in real-world systems. Specifically, we focus on games with quadratic cost functions and linear equality coupling constraints, a structure often arising in engineering and economic models \cite{budish2022flow, facchinei2007generalized, gould2001equality, ye2020extremum}.
By reformulating the corresponding Karush-Kuhn-Tucker conditions for GNE in games with such a specific structure, we derive a tractable convex optimization problem suitable for distributed solution. This reformulation is critical because, unlike existing approaches, it avoids relying on (strong) monotonicity assumptions. Moreover, we demonstrate that the objective function fulfills a so-called Polyak-Łojasiewicz (PL) condition, enabling the efficient application of well-established gradient-based algorithms under a defined communication topology within the system~\cite{PL_distr}. Consequently, we guarantee geometric convergence for calculating GNE in the games under consideration. It is worth noting that the work \cite{LinEqGram} presents a communication-based procedure achieving geometrically fast convergence to a GNE in games with strongly monotone pseudo-gradients and shared linear equality constraints. The paper \cite{meng2023linear} in turn proves an analogous result for strongly monotone games with shared linear inequality constraints, subject to a specific rank condition required for fast convergence. Thus, our work extends the existing literature by demonstrating geometrically fast distributed convergence in games with non-monotone pseudo-gradients and individual coupling constraints.     

%

Finally, unlike most existing approaches that rely on gradient information~\cite{LinEqGram, jordan2023first, meng2023linear}, which is often unavailable or costly in practice, we develop a novel zero-order, fully distributed algorithm tailored to the convex reformulation of the GNE problem. Each player observes only the values of their local cost and the residuals of their own constraints. Thus, no gradients or Jacobians are available. This information setup has been considered recently in the literature~\cite{tat_kam_TAC,  TatKam2025arxiv, suli}. For games with jointly convex coupling constraints,~\cite{tat_kam_TAC} proposes a payoff-based algorithm whose convergence relies on the existence of a convex potential. The work~\cite{suli} extends this setting to non-potential games with jointly linear constraints via a Tikhonov regularization approach that yields an efficient procedure. The recent paper~\cite{TatKam2025arxiv} leverages the same regularization idea to establish convergence rates for zero-order algorithms in strongly monotone games with shared affine coupling constraints.

The core difficulty in our approach is that the search direction for the local objectives in the reformulated optimization problem is a coupled, global quantity and the objectives' gradients do not coincide with any individual player's local gradients. Our contribution is a few-query scheme that reconstructs the required gradient surrogates from function values alone while keeping bias and variance controlled that preserves fast convergence in a purely distributed implementation. We demonstrate that, under an appropriate choice of the step size parameter, the proposed zero-order procedure possesses a convergence rate of the order $O(1/t)$ which  is (up to dimension-dependent constants)
asymptotically optimal within the class of strongly convex optimization problems~\cite{duchi2015optimal}. 

Thus, our contributions are as follows: 
\begin{itemize}
    \item We present a convex reformulation for GNE problems with general quadratic cost function (without requiring monotonicity) and individual coupling linear equality constraints. We demonstrate that the objective of the latter problem satisfies the Polyak–Łojasiewicz condition, which in particular allows for designing a distributed gradient method with geometrically fast convergence to a GNE in systems endowed with a communication topology; 
    \item We propose a payoff- and residual-only scheme to solve the obtained convex optimization problem that requires no closed-form gradients. Thus, each player uses just local cost evaluations and local constraint residuals to set up her update rules;
    \item With an appropriate step size policy, the zero-order method achieves an 
 $O(1/t)$ rate, which is asymptotically optimal, up to dimension-dependent constants, in the subclass of problems with strongly convex objectives.
\end{itemize}

\subsection{Notations. }The set $\{1,\ldots,n\}$ is denoted by $[n]$. We consider real normed space $\R^D$ and denote by $\R_{+}^D$ its non-negative orthant. The column vector $\xb\in\R^D$ is denoted by $\xb = [x^1,\ldots, x^D]$. We allow vectors to consist of blocks, i.e. $\xb=[\xb^1,\dots,\xb^n]\in\mathbb{R}^{nd}$ is the vector with the blocks $\xb^i\in\mathbb{R}^d$. We
define the projector \(P_i\in\mathbb{R}^{nd\times nd}\) onto block \(j\) by
$P_j \;=\; e_j e_j^\top \,\otimes\, I_d$,
where \(e_j\in\mathbb{R}^n\) is the \(j\)-th standard basis vector and \(I_d\) is the \(d\times d\) identity matrix. Thus, for any \(x=[x^1,\dots,x^n]\in\mathbb{R}^{nd}\),
$P_j x=[0,\dots,0,\;x^j,\;0,\dots,0]\in\R^d$. For any function $f:K\to\R$, $K\subseteq\R^{nd}$, $\nabla_{x^i} f(\xb) = \frac{\partial f(\xb)}{\partial \xb^i}\in\R^d$ is the partial derivative taken in respect to the $\xb^i$th block in the vector argument $\xb\in\R^{nd}$.
 We use $\langle \cdot,\cdot\rangle$ to denote the inner product in $\R^D$.
We use $\|\cdot\|$ to denote the Euclidean norm induced by the standard inner product in $\R^D$.
Given a matrix $H\in\R^{nd\times nd}$, we denote by $H_{(i,:)}\in\R^{d\times nd}$ and $H^{(i,:)}\in\R^{nd\times d}$and the $i$-th block of $d$ rows and columns in $H$ respectively.
$\operatorname{blkdiag}(H_i)_{i=1}^n$ denotes the matrix of the block-diagonal concatenation of the matrices $H_1,\ldots,H_n$, whereas $\operatorname{col}(H_i)_{i=1}^n$ denotes the matrix of the vertical concatenation of the matrices $H_1,\ldots,H_n$. $\sigma_{\min}^+(H)$ denotes the smallest positive singular value of matrix $H$.
We write $a_t = O(b_t)$ (as $t \to \infty$), if there exists a constant $C>0$ 
and an index $t_0$ such that 
$a_t \le C\, b_t$, $\forall t \ge t_0$.
We write $a_t = \Omega(b_t)$ (as $t \to \infty$), if there exists a constant $c>0$ 
 and an index $t_0$ such that 
$a_t \ge c\, b_t$, $\forall t \ge t_0$.
The standard $D$-dimensional Gaussian distribution is denoted by $\mathcal{N}(0,I_D)$ (with zero mean vector and identity covariance matrix).

\section{GNE Problem and Its Equivalent Optimization Formulation}
Let us consider a game $\Gamma(n,\{C_i\},\{J_i\})$ of $n$ players. Each player takes an action\footnote{Here we assume that actions of agents are from  $\R^d$. The analysis extends for the case of individual action dimension $d_i$ for each player $i$.} $\xb^i$ from $\R^d$. We focus on the case where not just the cost functions $J_i(\xb^i,\xb^{-i}): \R^{nd}\to\R$, $i\in[n]$, but also the action sets $C_i$ are coupled through the players' joint action $\xb=[\xb^i,\xb^{-i}]$. We make the following standing assumption regarding the game structure. 
\begin{assumption}\label{assum:standing}
 Each cost function $J_i(\xb^i,\xb^{-i})$ is a quadratic function, namely $J_i(\xb^i,\xb^{-i}) = \frac{1}{2}\xb^TQ_i\xb + \rb_i^T\xb + k_i$, where $Q_i\in\R^{nd\times nd}$ is symmetric, $\rb_i = [\rb_i^1,\ldots,\rb_i^n]\in\R^{nd}$ ($\rb_i^j\in\R^d$ for all $j\in[n]$), $k_i\in\R$, such that $J_i$ is convex in $\xb^i\in\R^d$ for any fixed $\xb^{-i}\in\R^{(n-1)d}$. Each coupled action set $C_i(\xb^{-i})$ is represented by a system of linear equations, i.e.  $C_i(\xb^{-i}) = \{\xb^i\in\R^d: \, A_i\xb-\bb_i = \boldsymbol{0}\}$ with some $A_i\in\R^{m_i\times nd}$ and $\bb_i\in\R^{m_i}$. 
\end{assumption}

Given assumption above, we face a game with coupling constraints. Thus, a solution concept is defined by a \emph{generalized Nash equilibrium} (GNE) that is a joint action from which no player has any incentive to unilaterally deviate.
\begin{definition}\label{def:GNE}
 A joint action $\xb^*\in  C = \cap_i{C_i}$ in the game $\Gamma=\Gamma(n,\{C_i\},\{J_i\})$ above is called a \emph{generalized Nash equilibrium} (GNE) if $J_i(\xb^{*,i},\xb^{*,-i})\le J_i(\xb^i,\xb^{*,-i})$ for all $i\in[n]$ and $\xb^i\in C_i(\xb^{*,-i})$.
 \end{definition}
 We assume there exists a GNE in $\Gamma$. 
 \begin{assumption}\label{assum:exist}
  The game $\Gamma$ possesses a GNE solution.      
 \end{assumption}
 Note that a GNE exists, in particular, if the action sets are convex and the pseudo-gradient of the game, namely the mapping $\Mb(\xb) = [\nabla_{\xb^1}J_1(\xb), \ldots, \nabla_{\xb^n}J_n(\xb)]$, is strongly monotone~\cite{facchinei2007generalized}\footnote{We recall that a mapping $\Mb:\R^D\to\R^D$ is called strongly monotone over $\R^D$, if $\langle\Mb(\xb)-\Mb(\yb),\xb-\yb\rangle\ge \mu\|\xb-\yb\|^2$ for some $\mu>0$ (if $\mu = 0$, the mapping $\Mb$ is called merely monotone).}. However, we do not assume monotonicity of the game. We merely require a quadratic structure of the cost functions (see Assumption~\ref{assum:standing}). 
 
We emphasize that the players in the game $\Gamma$ \textit{do not share} their coupling constraints but rather possess individual ones $C_i(\xb^{-i})$ for each $i\in[n]$. Thus, the concept of a so-called variational GNE, which is a solution to a specific variational inequality, and the corresponding computation approaches to find a variational GNE \cite{LinEqGram, TatKam2025arxiv} cannot be applied to $\Gamma$. 
To be able to deal with the coupling constraints, we use the well-known results on GNE presented in Theorem 8 of~\cite{facchinei2007generalized}, by means of a KKT-type condition. 
\begin{prop}\label{prop:1}
 Given Assumption~\ref{assum:standing}, a joint action $\xb^*$ is a GNE of the game $\Gamma$ if and only if there exist $\blab^{*,i}\in\R^{m_i}$, $i\in[n]$, such that for all $i\in[n]$ the following holds: 
 \begin{align}\label{eq:GNE}
     &\nabla_{\xb^i} J_i(\xb^*) + A_{i(i,:)}^T\blab^{*,i} = \boldsymbol{0},\cr
     &A_i\xb^*-\bb_i = \boldsymbol{0}.
 \end{align}
\end{prop}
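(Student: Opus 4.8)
The plan is to unpack Definition~\ref{def:GNE} into $n$ decoupled best-response problems and apply the standard KKT machinery to each. By definition, $\xb^*$ is a GNE precisely when, for every $i\in[n]$, the block $\xb^{*,i}$ solves the individual problem
\[
\min_{\xb^i\in\R^d} J_i(\xb^i,\xb^{*,-i}) \quad\text{s.t.}\quad A_i[\xb^i,\xb^{*,-i}]-\bb_i=\boldsymbol{0},
\]
with the opponents' actions $\xb^{*,-i}$ frozen. The key structural observation is that, although $\Gamma$ is in general non-monotone, each of these $n$ subproblems is a \emph{convex} program: by Assumption~\ref{assum:standing} the objective $J_i(\cdot,\xb^{*,-i})$ is convex in $\xb^i$, and the feasible set is an affine subspace since the constraint is linear in $\xb^i$. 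I would then derive the stated conditions per player and collect them, which is exactly the specialization of Theorem~8 in \cite{facchinei2007generalized} to our setting.

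Concretely, for the $i$-th subproblem I form the Lagrangian
\[
L_i(\xb^i,\blab^i) = J_i(\xb^i,\xb^{*,-i}) + \langle\blab^i,\, A_i[\xb^i,\xb^{*,-i}]-\bb_i\rangle,
\]
with $\blab^i\in\R^{m_i}$. Stationarity $\nabla_{\xb^i}L_i=\boldsymbol{0}$ yields the first line of \eqref{eq:GNE}: differentiating $\langle\blab^i, A_i\xb\rangle$ with respect to $\xb^i$ selects the column block of $A_i$ acting on $\xb^i$, producing the term $A_{i(i,:)}^T\blab^i$; primal feasibility gives the second line. Setting $\blab^{*,i}=\blab^i$ and running this over all $i$ assembles both statements of \eqref{eq:GNE}. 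The sufficiency direction then costs nothing: since each $J_i(\cdot,\xb^{*,-i})$ is convex and the constraint affine, any point satisfying the KKT system is a global minimizer of the corresponding best-response problem, so the collected conditions force $\xb^*$ to be a GNE.

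The one point deserving care — and the main obstacle to a clean statement — is guaranteeing, in the necessity direction, that a multiplier $\blab^{*,i}$ actually \emph{exists}. In general, KKT necessity requires a constraint qualification such as Slater's condition or a rank condition on the active constraints, and it is precisely such conditions that impose restrictive assumptions in related works. Here the obstacle dissolves because the constraints $A_i[\xb^i,\xb^{*,-i}]-\bb_i=\boldsymbol{0}$ are \emph{affine} in the decision variable $\xb^i$: the linearity (Abadie) constraint qualification then holds automatically, so a multiplier exists with no rank- or Slater-type hypothesis, and any non-uniqueness arising when $A_{i(i,:)}$ is rank-deficient is harmless since we only assert existence. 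Thus the equivalence holds without invoking monotonicity of $\Mb$ anywhere, exactly as needed.
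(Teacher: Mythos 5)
Your argument is correct and follows the same route the paper takes: the paper proves Proposition~\ref{prop:1} simply by invoking Theorem~8 of \cite{facchinei2007generalized}, and your write-up is exactly the standard per-player KKT specialization underlying that citation, with the constraint-qualification issue correctly dispatched by the affineness of each $C_i(\xb^{*,-i})$ and sufficiency by convexity of $J_i(\cdot,\xb^{*,-i})$. Nothing is missing.
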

We call the vectors $\blab^{*,i}$ the dual variables for the solution $\xb^*$. If Assumption~\ref{assum:exist} holds, then there exists a primal-dual pair $\xb^*$ and $\blab^*$ in $\Gamma$ satisfying~\eqref{eq:GNE} which implies that $\xb^*$ is a GNE in $\Gamma$.

Taking into account the result in Proposition~\ref{prop:1} and the form of the cost functions (see again Assumption~\ref{assum:standing}) resulting in the relation $\nabla_{\xb^i} J_i(\xb) = H_i \xb + \rb_i^{i}\in\mathbb{R}^d$ with $H_i=\tfrac12\big[Q_i+Q_i^\top\big]_{(i,:)}\in\mathbb{R}^{d\times nd}$, we conclude that a GNE $\xb^*$ and its corresponding dual variable $\blab^*=[\blab^{*,1},\ldots, \blab^{*,n}] \in \R^{m}$, where $m=\sum_{i=1}^n m_i$, can be found as a solution to the following optimization problem: 
\begin{align}\label{eq:DOP}
  \min_{\xb,\blab} F(\xb,\blab),
\end{align} 
where $F(\xb,\blab) = \sum_{i=1}^n f_i(\xb,\blab^i)$ with
\[f_i(\xb,\blab^i)=\| H_i \xb + \rb_i^{i} + A_{i(i,:)}^T\blab^i \|^2 + \|A_i\xb-\bb_i\|^2.\] 

At this point we note that the function $F(\xb,\blab)$ is a \emph{gap function}, as $F(\xb,\blab)\ge 0$. Therefore, $\xb^*$ is a GNE if and only if  $F(\xb^*,\blab^*)=0$ for some $\lambda^*$. In the following, our goal will be to minimize the gap function $F$. 

Let us introduce the notation $\zb = [\xb,\blab]$. 
It is straightforward to see that the function $F$ in the problem~\eqref{eq:DOP} is of the form 
\begin{align}\label{eq:F}
    F(\zb) = \|G\zb+\eb\bigr\|^2.
\end{align}
 In above, 
 \[
G=\begin{bmatrix}
H_{\mathrm{stk}} & A^{\top}_{\mathrm{blk}}\\[2mm]
A_{\mathrm{stk}} & 0
\end{bmatrix},
\qquad
\eb=\begin{bmatrix}
\rb_{\mathrm{stk}}\\[1mm]
-\bb_{\mathrm{stk}}
\end{bmatrix},
\]
with $
H_{\mathrm{stk}}=\operatorname{col}(H_i)_{i=1}^n$, $A_{\mathrm{stk}}=\operatorname{col}(A_i)_{i=1}^n$,
$A^{\top}_{\mathrm{blk}}=\operatorname{blkdiag}(A_{i(i,:)}^{\top})_{i=1}^n$, 
$\rb_{\mathrm{stk}}=[\rb_1^{1},\dots,\rb_n^{n}]$, $\bb_{\mathrm{stk}}=[\bb_1,\dots,\bb_n]$.
Because F is of this form, we conclude that the function $F$ satisfies a so called Polyak--Łojasiewicz condition (PL condition), i.e.\footnote{Here we take into account that the minimum of $F$ is attained (see Assumption~\ref{assum:exist}) and $F^*=0$.}
\begin{align}\label{eq:PL}
    \|\nabla F(\zb)\|^2 \ge 2\mu_F F(\zb),\quad \forall \bx,\blab,
\end{align}
where 
\begin{align}\label{eq:mu}
\mu_F = 2\,\big(\sigma_{\min}^+(G)\big)^2.
\end{align}
\begin{rem}\label{rem:1oa}The Polyak--Łojasiewicz condition has recently gained a lot of attention in the optimization literature. In particular, it has been shown that the gradient descent algorithm converges linearly to an unconstrained optimal value of any Lipschitz smooth function satisfying  the PL condition~\cite{PL}. Due to its quadratic structure, the function $F$ possesses Lipschitz continuous gradients. Hence, one can directly apply the result from the paper~\cite{PL_distr} to set up a distributed procedure to solve~\eqref{eq:DOP} provided a definite communication structure in the system and access to the first order information meaning that each player can calculate $\nabla f_i$ at any given point. The corresponding algorithm is proven to converge exponentially fast to a solution $[\xb^*,\blab^*]$, given an appropriately chosen constant step size.
\end{rem}

In this work, we relax the assumption regarding the access to the gradient information and propose a distributed zero-order approach to solve the problem~\eqref{eq:DOP}, i.e., to find a GNE in the game $\Gamma$.

\section{Zero-order Distributed Approaches}
We assume each player $i$ in the game $\Gamma$ has access to her \emph{zero-order information}. It means that given a time step $t$ and a query point $\tilde{\xb}(t)$, the player $i$ observes the local cost value $J_i(\tilde{\xb}(t))$ and the violation of the local constraints, namely $A_i\tilde{\xb}(t)-\bb_i$.
Note that using this information and controlling her local dual variable $\blab^i(t)$, each player $i$  can determine the value of her local Lagrangian at any $\tilde{\xb}(t)$ and $\blab^i(t)$:
 \begin{align}\label{eq:Lagr}
 L_i(\tilde{\xb}(t),\blab^i(t)) =  J_i(\tilde{\xb}(t)) + \langle\blab^i(t), A_i\tilde{\xb}(t)-\bb_i\rangle.
 \end{align}

To solve the problem~\eqref{eq:DOP} in a distributed way, having access merely to the local zero-order information described above, each player constructs an estimation of the partial derivatives $\frac{\partial F}{\partial \xb^i}$ and $\frac{\partial F}{\partial \blab^i}$ of the objective function $F$. Based on these estimations, each agent performs a gradient descent step. Then, the overall procedure mimics the gradient descent algorithm applied to the function $F$.
And since the function $F$ satisfies PL-condition, we expect such a procedure to approach a solution with a fast rate. 

To discuss appropriate gradient estimations, let us first consider two parts of each local function $f_i(\xb,\blab^i) = h_i(\xb,\blab^i) + c_i(\xb)$, where 
\begin{align}\label{eq:2p}
h_i(\xb,\blab^i) &= \| H_i \xb + \rb_i^{i} + A_{i(i,:)}^T\blab_i \|^2 = \|\nabla_{\xb^i}L_i(\xb,\blab^i)\|^2, \cr
c_i(\xb) &= \|A_i\xb-\bb_i\|^2,
\end{align}
such that 
\begin{align}\label{eq:sum}
F(\xb,\blab) = \sum_{i=1}^n h_i(\xb,\blab^i) +  \sum_{i=1}^n c_i(\xb).
\end{align}
As we assume access to the violation of the local constraints, the value of $c_i$ can be directly calculated for any query point $\tilde{\xb}(t)$. In this way, many known techniques can be applied to let the player $i$ estimate the corresponding partial derivatives of $c_i$ at a definite point~\cite{NesterovSpokoiny, tat_kam_TAC}. As for the gradients of $h_i$, we cannot directly apply the same approaches. Indeed, here we need to use the value observations of  the Lagrangian $L_i(\xb,\blab^i)$ to estimate the derivatives of the function $h_i(\xb,\blab^i) = \|\nabla_{\xb^i}L_i(\xb,\blab^i)\|^2$, not the derivatives of the Lagrangian $L_i(\xb,\blab^i)$ itself. In the following discussion we present appropriate estimations of $\nabla_{\xb^i}h_i(\xb,\blab^i)$ as well as of $\nabla_{\blab^i}h_i(\xb,\blab^i)$ using in particular a quadratic structure of $h_i$. Finally, we notice that due to the proposed reformulation of the initial GNE problem in terms of the single optimization problem~\eqref{eq:DOP} with the objective function in~\eqref{eq:sum}, we need to assume existence of an \emph{aggregator} in the system which helps to turn everyone’s local zero-order measurements into the final estimate of the sum of the corresponding partial derivatives.

In the further analysis we will need the following straightforward calculations of the gradients:
\begin{align}\label{eq:nab_h}
    \nabla_{\xb^i}h_j(\xb,\blab^j) = 2 (H_j^{(:,i)})^{\top} \nabla_{\xb^i}L_j(\xb,\blab^j).
\end{align}
 
\subsection{Gradient Estimations}
 To estimate the gradients $\frac{\partial F}{\partial \xb^i}$ and $\frac{\partial F}{\partial \blab^i}$, $i\in[n]$, at the current joint action $\xb(t)$, 
each player $i$ generates independent Gaussian directions $\xi_x^i(t)\sim\mathcal N(0,I_{d})$, $\xi_{\lambda}^i(t)\sim\mathcal N(0,I_{m_i})$, $\eta^i(t)\sim\mathcal N(0,I_{d})$. Thus, the following four query points can be obtained: 
\begin{align*}
   \tilde{\xb}_1(t) &= \xb(t)  - \sigma \eta(t), \quad \tilde{\xb}_3(t) = \xb(t) + \delta \xi_x(t) - \sigma \eta(t),\cr
   \tilde{\xb}_2(t) &= \xb(t)  + \sigma \eta(t),\quad   \tilde{\xb}_4(t) = \xb(t) + \delta \xi_x(t) + \sigma \eta(t),
 \end{align*}
 where $\xi_x(t)  = [\xi^{1}_{x}(t), \ldots, \xi^n_{x}(t)]\sim\mathcal N(0,I_{nd})$, $\xi_{\lambda}(t) = [\xi^1_{\lambda}(t),\ldots,\xi^n_{\lambda}(t)]\sim\mathcal N(0,I_{m})$, $\eta(t) = [\eta^1(t), \ldots, \eta^n(t)]\sim\mathcal N(0,I_{nd})$, and  $\sigma>0$, $\delta>0$.
Then each player $i$ samples the value of her local Lagrangian $L_i$ at the following four points:
\begin{align}\label{eq:4p}
   \yb^i_1(t) &= [\tilde{\xb}_1(t),\; \blab^i(t) ], \quad \yb^i_3(t) = [\tilde{\xb}_3(t),\; \blab^i(t) + \delta  \xi^i_{\lambda}(t)],\cr
   \yb^i_2(t)& = [\tilde{\xb}_2(t),\; \blab^i(t) ], \quad \yb^i_4(t) = [\tilde{\xb}_4(t),\; \blab^i(t) + \delta  \xi^i_{\lambda}(t)].
\end{align}
 Next, the player $i$ constructs the following two–point differences for her Lagrangian function $L_i$ and local function $c_i$ with an intend to approximate their full gradients:
 \begin{align}\label{eq:Delta}
\Delta_1^{(i)}(t)&=\frac{L_i(\yb^i_2(t))-L_i(\yb^i_1(t))}{2\sigma},\cr
\Delta_2^{(i)}(t)&=\frac{L_i(\yb^i_4(t))-L_i(\yb^i_3(t))}{2\sigma},\cr
\Delta_3^{(i)}(t)&=\frac{c_i(\tilde{\xb}_2(t))-c_i(\tilde{\xb}_1(t))}{2\sigma}.
\end{align}
Based on the constructions above, the player $i$ defines the following squared directional increments:
\begin{align}\label{eq:S}
S_1^{(i)}(t)&=\frac{(\Delta_2^{(i)}(t))^2-(\Delta_1^{(i)}(t))^2}{\delta},\cr
\quad
S_2^{(i)}(t)&=S_1^{(i)}(t)\,\|\eta^i(t)\|^2.
\end{align}

Next, we introduce the \emph{aggregator} of the system. Its goal is to aggregate the values $\tfrac{1}{2}(S_2^{(i)}(t)-d\,S_1^{(i)}(t))$ as well as $\Delta_3^{(i)}(t)$, which are locally constructed by each player $i$ by means of the available zero-order information, and to broadcast back to each player the sums 
\begin{align}\label{eq:SD}
&S(t) =\tfrac{1}{2}\sum_{j=1}^n(S_2^{(j)}(t)-d\,S_1^{(j)}(t)),\cr
&D(t) = \sum_{j=1}^n\Delta_3^{(j)}(t).
\end{align}
After receiving $S(t)$ and $D(t)$, each player constructs her estimation of $\frac{\partial F}{\partial \xb^i}$ and $\frac{\partial F}{\partial \blab^i}$ at the point $[\xb(t),\blab(t)]$ respectively as follows: 
\begin{align}\label{eq:grEst}
    \zeta_{\xb^i}(t)\;&=\;S(t)\;\xi_x^i(t)+D(t)\eta^i(t)\in\R^d,\cr
    \zeta_{\blab^i}(t)\;&=\;\tfrac12\,\big(S_2^{(i)}-d\,S_1^{(i)}\big)\,\xi_{\lambda}^i(t)\in\R^{m_i}.
\end{align}
We note that the aggregated value $S(t)$ is required merely for the estimation of $\frac{\partial F}{\partial \xb^i}$. The estimation of $\frac{\partial F}{\partial \blab^i}$ can be constructed locally by each player $i$ based on $S_1^{(i)}(t)$ and $S_2^{(i)}(t)$. The reason is that each local function $h_i(\xb,\blab^i)$ in the optimization problem under consideration does not depend on $\blab^{-i}$. 

To be able to use the gradient estimations constructed above in the gradient-based procedure, we need to figure out their stochastic properties, namely the mean and the second moment values. We denote a sigma-algebra generated by the random variables $\{\xi_{x}(k),\xi_{\lambda}(k),\eta(k)\}_{k\in[t]}$ by $\EuScript F_t$. The estimation properties are formulated in the following lemma. 
\begin{lem}\label{lem:grEst}
  The  estimations in~\eqref{eq:grEst} are unbiased gradient estimations of the function $F$ at the point $[\xb(t), \blab(t)]$, i.e. for any $\delta>0$ and $\sigma>0$
  \begin{align*}
      \E\{\zeta_{\xb^i}(t)\;|\;\EuScript F_t\} &= \nabla_{\xb^i}F(\xb(t),\blab(t)),\cr
      \E\{\zeta_{\blab^i}(t)\;|\;\EuScript F_t\} &= \nabla_{\blab^i}F(\xb(t),\blab(t)).
  \end{align*}
Moreover, 
\begin{align*}
\E\{\big\|\zeta_{\xb^i}(t)\big\|^2\;|\;\EuScript F_t\}&= g_1(\xb(t),\blab(t))\;+\;O(\delta^2),\cr
\E\{\big\|\zeta_{\blab^i}(t)\big\|^2\;|\;\EuScript F_t\}&=g_2(\xb(t),\blab(t))\;+\;O(\delta^2),
\end{align*}
where $g_1$ and $g_2$ are some quadratic functions of the variables $\xb(t)$ and $\blab(t)$. 
\end{lem}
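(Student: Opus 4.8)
The plan is to exploit two structural facts: each Lagrangian $L_i(\cdot,\blab^i)$ and each $c_j$ is quadratic in $\xb$, so that every central difference in~\eqref{eq:Delta} is \emph{exact}; and the probing directions $\eta,\xi_x,\xi_\lambda$ are independent standard Gaussians, so both the bias and the second moment reduce to elementary Gaussian moment computations. Throughout I condition on $\EuScript F_t$, under which the iterate $(\xb(t),\blab(t))$ is fixed and the directions are fresh and independent, and I suppress the time index. Since $L_i$ is quadratic in its first argument, the quadratic remainder cancels in the central difference, giving $\Delta_1^{(i)}=\langle\nabla_{\xb}L_i(\xb,\blab^i),\eta\rangle$, $\Delta_2^{(i)}=\langle\nabla_{\xb}L_i(\xb+\delta\xi_x,\blab^i+\delta\xi_\lambda^i),\eta\rangle$, and $\Delta_3^{(j)}=\langle\nabla c_j(\xb),\eta\rangle$. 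Writing $g_i:=\nabla_{\xb}L_i(\xb,\blab^i)=B_i\xb+\rb_i+A_i^\top\blab^i$ with $B_i:=\tfrac12(Q_i+Q_i^\top)$, and using that $\nabla_\xb L_i$ is affine, I get $\nabla_{\xb}L_i(\xb+\delta\xi_x,\blab^i+\delta\xi_\lambda^i)=g_i+\delta u_i$ with $u_i:=B_i\xi_x+A_i^\top\xi_\lambda^i$, whence
\[ S_1^{(i)}=2\langle g_i,\eta\rangle\langle u_i,\eta\rangle+\delta\langle u_i,\eta\rangle^2 . \]
Exactness for every $\sigma>0$ already eliminates any $\sigma$-dependence from the bias.

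The engine of the argument is the Gaussian (Stein-type) identity
\[ \E\big[(\|\eta^i\|^2-d)\,\eta\eta^\top\big]=2P_i , \]
which I would verify entrywise by Isserlis' theorem: the terms proportional to $\delta_{ab}$ cancel and only the diagonal of block $i$ survives. This is what converts the raw probe $\langle g_j,\eta\rangle\langle u_j,\eta\rangle$, after multiplication by $(\|\eta^j\|^2-d)$, into block $j$ of the relevant gradient, matching the block structure of $h_j=\|\nabla_{\xb^j}L_j\|^2$. For $\zeta_{\blab^i}$ I integrate over $\xi_\lambda^i$ first, so the $B_i\xi_x$ part of $u_i$ averages out and $\E[\langle A_i^\top\xi_\lambda^i,\eta\rangle\,\xi_\lambda^i]=A_i\eta$; integrating over $\eta$ with the identity then yields $2A_{i(i,:)}\nabla_{\xb^i}L_i=\nabla_{\blab^i}h_i=\nabla_{\blab^i}F$, while the $\delta$-term vanishes as an odd-order Gaussian moment. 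For $\zeta_{\xb^i}$ I split $\E[D(t)\eta^i]$ and $\E[S(t)\xi_x^i]$: the former equals $\nabla_{\xb^i}\sum_j c_j$ by $\E[\eta^i(\eta^i)^\top]=I$ and block independence; the latter I evaluate by integrating successively over $\xi_\lambda$ (killing its linear part), over $\xi_x$ (extracting block $i$ via $\E[\xi_x^i(\xi_x^i)^\top]=I$), and over $\eta$ (extracting block $j$ via the identity), reproducing $\sum_j\nabla_{\xb^i}h_j$ through~\eqref{eq:nab_h}; its $\delta$-term again vanishes by parity. Summing gives $\E[\zeta_{\xb^i}]=\nabla_{\xb^i}F$.

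For the second moments I write $S(t)=S^{(0)}+\delta S^{(1)}$ from the decomposition above and expand $\|\zeta_{\xb^i}\|^2=S^2\|\xi_x^i\|^2+2SD\langle\xi_x^i,\eta^i\rangle+D^2\|\eta^i\|^2$ and $\|\zeta_{\blab^i}\|^2=\tfrac14(\|\eta^i\|^2-d)^2(S_1^{(i)})^2\|\xi_\lambda^i\|^2$. The key point is again parity: every $\delta^1$ cross term carries an odd total power of some $\xi_\lambda^j$ or of $\xi_x$ and hence has zero expectation, so only the $\delta^0$ and $\delta^2$ contributions survive. The surviving $\delta^0$ part is the expectation of a product of two factors each affine in $(\xb,\blab)$ (through $g_j$ and $\nabla c_j$), hence a quadratic function of $(\xb,\blab)$, which I name $g_1$ (resp. $g_2$); all remaining terms are collected into $O(\delta^2)$.

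The main obstacle is the combinatorial bookkeeping of the fourth- and sixth-order Gaussian moments in the previous two steps, and in particular tracking parity carefully enough to conclude both that unbiasedness is \emph{exact} for all $\delta,\sigma$ and that the second moment contains no $\delta^1$ term. The conceptual crux is the Stein identity $\E[(\|\eta^i\|^2-d)\eta\eta^\top]=2P_i$: it is precisely what allows the isotropic probes to reconstruct the block-structured gradients of the squared-gradient terms $h_j$ from scalar Lagrangian evaluations alone.
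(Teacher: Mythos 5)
Your proposal is correct and follows essentially the same route as the paper's proof: exactness of the central differences for quadratic $L_i$ and $c_i$, the decomposition $S_1^{(i)}=2\langle g_i,\eta\rangle\langle u_i,\eta\rangle+\delta\langle u_i,\eta\rangle^2$, the key Gaussian identity $\E[(\|\eta^j\|^2-d)\eta\eta^\top]=2P_j$ to extract the block-structured gradients of $h_j$, and parity to kill the odd-order $\delta$-terms. Your second-moment argument is in fact spelled out more explicitly than the paper's (which only asserts the direct calculation), but it is the same computation, so no substantive difference.
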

\begin{proof}
    First, we demonstrates that $\zeta_{\xb^i}(t)$ and $\zeta_{\blab^i}(t)$
 are unbiased estimations of $\frac{\partial F(\xb(t))}{\partial \xb^i}$ and $\frac{\partial F(\xb(t))}{\partial \blab^i}$ respectively. 
 As each Lagrangian $L_i(\xb,\blab^i)$ is a quadratic function in $\xb$, we obtain from~\eqref{eq:Delta} 
 \begin{align}\label{eq:Delta1}
     &\Delta_1^{(i)}(t)\cr
     &=\frac{L_i(\xb(t)  + \sigma \eta(t),\; \blab(t))-L_i(\xb(t)  - \sigma \eta(t),\; \blab(t))}{2\sigma}\cr
     &=\langle\nabla_{\xb}L_i(\xb(t),\blab^i(t)), \;\eta(t)\rangle,\cr
     &\Delta_2^{(i)}(t)=\langle\nabla_{\xb}L_i(\xb(t)+ \delta \xi_x(t),\;\blab^i(t)+ \delta  \xi^i_{\lambda}(t)),\; \eta(t)\rangle,\cr
     &\Delta_3^{(i)}(t)=\langle\nabla_{\xb}c_i(\xb(t)), \;\eta(t)\rangle
 \end{align}
 Since $\nabla_{\xb}L_i(\xb(t),\blab^i(t))= Q_i \xb(t) + \rb_i + A_i^T\blab^i(t)$ is an affine function in $[\xb(t),\blab^i(t)]$, we conclude that
 \begin{align}
     &\nabla_{\xb}L_i(\xb(t)+ \delta \xi_x(t),\;\blab^i(t)+ \delta  \xi^i_{\lambda}(t)) \cr
     &\quad=\nabla_{\xb}L_i(\xb(t),\blab^i(t)) + \delta\big(Q_i \xi_x(t) + A_i^\top \xi^i_\lambda(t)\big).
 \end{align}
 Hence,
 \begin{align}\label{eq:Delta2}
     \Delta_2^{(i)}(t)=&\langle\nabla_{\xb}L_i(\xb(t),\;\blab^i(t)), \eta(t)\rangle\cr
     &+ \delta \langle Q_i \xi_x(t) + A_i^\top \xi^i_\lambda(t),\; \eta(t)\rangle\cr
     =&\Delta_1^{(i)}(t) + \delta \langle Q_i \xi_x(t) + A_i^\top \xi^i_\lambda(t),\; \eta(t)\rangle.
 \end{align}
 For the sake of notation simplicity, let us introduce the following shorthand (omitting $t$) in the corresponding notations: 
 \begin{align*}
     &\nb_i = \nabla_{\xb}L_i(\xb,\blab), \quad \pb_i = Q_i \xi_x + A_i^\top \xi^i_\lambda, \quad \qb_i = \nabla_{\xb}c_i(\xb). 
 \end{align*}
  Thus, 
 \begin{align}\label{eq:S1}
&S_1^{(i)}= \frac{(\Delta_2^{(i)})^2-(\Delta_1^{(i)})^2}{\delta}=2\langle\nb_i,\eta\rangle\langle\pb_i,\eta\rangle+\delta\,\langle\pb_i(t),\eta\rangle^2.
\end{align}
Next, according to the definition of $S_2^{(i)}$ (see~\eqref{eq:S}), we obtain
\begin{align}
S_2^{(i)}-dS_1^{(i)}=(2\langle\nb_i,\eta\rangle\langle\pb_i,\eta\rangle+\delta\,\langle\pb_i,\eta\rangle^2)(\|\eta^i\|^2-d)        
\end{align}
On the other hand, 
$\Delta_3^{(i)}=\langle\qb_i, \eta\rangle$, and, hence, we conclude that (see~\eqref{eq:SD} and~\eqref{eq:grEst})
\begin{align}\label{eq:zeta}
    &\zeta_{\xb^i}=\sum_{j=1}^n\langle\nb_j,\eta\rangle\langle\pb_j,\eta\rangle(\|\eta^j\|^2-d) \xi_x^i\cr
    &\,+\frac{1}{2}\sum_{j=1}^n\delta\,\langle\pb_j,\eta\rangle^2(\|\eta^j\|^2-d) \xi_x^i+\sum_{j=1}^n\langle\qb_j, \eta\rangle\eta^i.
\end{align}
We proceed with estimation of the conditional expectation of $\zeta_{\xb^i}$ in respect to the Gaussian variables $\eta$, $\xi_{x}$, and $\xi_{\lambda}$. To do so, we use the following property of the Gaussian distribution: For independent $u_j\sim\mathcal N(0,I_{d})$, $j\in[n]$, $u=[u_1,\ldots,u_n]$ and any $a,b\in\mathbb{R}^{nd}$,
$
\mathbb{E}\big[\langle a, u\rangle \langle b, u\rangle\,\|u_j\|^2\big]
= \langle a,(d I_{nd}+2P_j)\,b\rangle$ (see \textbf{Notations} for definition of the operator $P_j$).
Applying the last relation, we calculate\footnote{More details regarding~\eqref{eq:zeta1} can be found in Appendix~\ref{app:details}.} $\E\{\zeta_{\xb^i}\;|\;\eta\}$: 
\begin{align}\label{eq:zeta1}
\E\{\zeta_{\xb^i}\;&|\;\eta\} = 2\sum_{j=1}^n\langle\nb_j,P_j\pb_j\rangle\xi_x^i\cr
&+\mbox{Pol}_2(\xi_{x},\xi_{\lambda})\xi_x^i + \nabla_{\xb^i}\sum_{j=1}^nc_j(\xb),
\end{align}
where $\mbox{Pol}_2(\xi_{x},\xi_{\lambda})$ is a second order polynomial dependent on $\xi_{x}$ and $\xi_{\lambda}$. Finally, taking into account that $\nb_j = \nabla_{\xb}L_j(\xb,\blab)$, $\pb_j = H_j\xi_x + A_j^\top \xi_\lambda$ for all $j\in[n]$, and $\E\{\mbox{Pol}_2(\xi_{x},\xi_{\lambda})\xi_x^i \;|\;\xi_x\}= 0$, we get
\begin{align}
&\E\{\zeta_{\xb^i}\;|\;\mathcal F_t\} = 2\sum_{j=1}^n\E\{\langle\nb_j,P_j\pb_j\rangle\xi_x^i|\; \xi_{x}, \xi_{\lambda}\}\cr
&\qquad\qquad+ \nabla_{\xb^i}\sum_{j=1}^nc_j(\xb)\cr
&=2\sum_{j=1}^n(H_j^{(:,i)})^{\top} \nabla_{\xb^i}L_j(\xb,\blab^j)+ \nabla_{\xb^i}\sum_{j=1}^nc_j(\xb),
\end{align}
which implies, due to the equality~\eqref{eq:nab_h}, that
\begin{align*}
    \E\{\zeta_{\xb^i}(t)\;|\;\EuScript F_t\} &= \nabla_{\xb^i}\sum_{j=1}^n[h_j(\xb(t),\blab^i(t)) + c_j(\xb(t))]\cr
    &=\nabla_{\xb^i}F(\xb(t),\blab(t)).
\end{align*}
Analogous calculations demonstrate that $\E\{\zeta_{\blab^i}(t)\;|\;\EuScript F_t\} = \nabla_{\blab^i}F(\xb(t),\blab(t))$. 
As for the second moments, it suffices to use the expression~\eqref{eq:zeta}  and follow the order of the Gaussian vectors $\xi_x$, $\xi_{\lambda}$, and $\eta$ therein while taking the conditional expectation of $\|\zeta_{\xb^i}\|^2$. Again, similar direct calculation can be applied to get the relation for  the conditional expectation of $\|\zeta_{\blab^i}\|^2$.
\end{proof}

\subsection{Distributed Algorithm and Main Result}
Having access to the estimations of the partial derivatives $\frac{\partial F}{\partial \xb^i}$ and $\frac{\partial F}{\partial \blab^i}$ constructed in~\eqref{eq:grEst}, each player $i$ updates their current action and dual variable according to the gradient descent iterates: 
\begin{align}\label{eq:grDes}
    \xb^i(t+1) &= \xb^i(t) - \gamma_t\;\zeta_{\xb^i}(t)\cr
    \blab^i(t+1) &= \blab^i(t) - \gamma_t\;\zeta_{\blab^i}(t),
\end{align}
where $\gamma_t$ is a step size to be set up. The overall distributed zero-order procedure can be now summarized in Algorithm~\ref{alg:1}. 

\begin{algorithm}[H]\caption{Zero-order Procedure for Player $i$}\label{alg:1}
{\bf Input:} Each player process their local action $\xb^i$ and dual variable $\blab^i$ with initial values $\xb^i(0)$, $\blab^i(0)$ based on the access to the local cost value $J_i(\tilde{\xb})$ and the violation of the local constraints $A_i\tilde{\xb}-\bb_i$ at any query point $\tilde{\xb}$, $i\in [n]$; additionally consider a step size sequence $\{\gamma_t>0\}$ as well as variance parameter sequences $\{\sigma>0\}$, $\{\delta>0\}$.  

\noindent
{\bf For} $t=1,2,\ldots$, player $i$ performs the following steps:

\begin{itemize}
  \item Sample $\xi_x^i(t),\,\xi_\lambda^i(t),\,\eta^i(t), \; \forall i$, as independent standard Gaussian vectors and construct 4 query points $\yb_k(t))$, $k=1,2,3,4$, via~\eqref{eq:4p}.

  \item Evaluate $L_i(\yb_k(t)), \; \forall i \in [n]$, $k=1,2,3,4$ as well as $c_i(\yb_1(t))$, $c_i(\yb_2(t)), \; \forall i \in [n]$ and compute $\Delta_1^{(i)},\,\Delta_2^{(i)},\,\Delta_3^{(i)}, \; \forall i \in [n]$ via \eqref{eq:Delta}.
  
  \item Compute $S_1^{(i)},\,S_2^{(i)}, \; \forall i \in [n]$ via \eqref{eq:S}.
  
  \item Send $\big(\tfrac12(S_2^{(i)}-dS_1^{(i)}),\,\Delta_3^{(i)}\big)$ to aggregator; receive $S(t),D(t)$.
  
  \item Set $\zeta_{\xb^i}(t)=S(t)\xi_x^i(t)+D(t)\eta^i(t)$ and $\zeta_{\blab^i}(t)=\tfrac12\big(S_2^{(i)}{-}dS_1^{(i)}\big)\xi_{\lambda}^i(t)$.
  
  \item Update $\xb^i(t{+}1)=\xb^i(t)-\gamma_t\zeta_{\xb^i}(t)$ and $\blab^i(t{+}1)=\blab^i(t)-\gamma_t\zeta_{\blab^i}(t)$.
\end{itemize}
\end{algorithm}

\begin{rem}
    As we can see, Algorithm~\ref{alg:1} follows the logic of the zero-order approach: Each player uses merely actual values of her cost function and constraint residual to construct the estimates $S_1^{(i)}$ and $S_2^{(i)}$ which are sent to the aggregator. We also notice that the procedure requires four query points $\yb_k(t)$, $k=1,2,3,4$, to estimate the corresponding gradients at each $t$. It uses  $\yb_1(t)$ and $\yb_2(t)$ to be able to approximate $\nabla_{\xb^i} L_i(\xb,\blab^i)$ and $\nabla_{\xb} c_i(\xb)$, whereas the next pair of query points, i.e. $\yb_3(t)$ and $\yb_4(t)$, are used to estimate $\nabla_{\xb}\|\nabla_{\xb^i} L_i(\xb,\blab^i)\|^2$. We apply two-point queries for each gradient approximation above to be able to achieve an unbiased estimation with an appropriate upper bound for the second moment (see, for example~\cite{TatKam2025arxiv}, for more discussion on the power of two-point estimations).   
    As for the practical aspect, the presented four-point estimation assumes the players to deviate at each $t$ from their current joint action four times as in~\eqref{eq:4p} and query or play the corresponding joint actions in a synchronous way to experience the cost values $J_i$, $i\in[n]$, and the violation of their constraints at these points. 
\end{rem}
Next, we formulate the main convergence result for Algorithm~\ref{alg:1}. 
\begin{theorem}\label{th:main}
 Let Assumptions~\ref{assum:standing},~\ref{assum:exist} hold for the game $\Gamma$. Then, under the choice $\gamma_t = \frac{g}{t}$ with $g > \frac{1}{\mu_F}$ and given $\sigma>0$, $\delta>0$, the iterates $\xb(t) = [\xb^1(t),\ldots,\xb^n(t)]$ converge almost surely to some (random) GNE. Moreover, the expectation of the gap function, i.e. $\E F_t = \E F(\xb(t),\blab(t))$, converges to $0$ with the rate $O\left(\frac{1}{t}\right)$, namely $\E F_t = O\left(\frac{1}{t}\right)$ as $t\to\infty$.
\end{theorem}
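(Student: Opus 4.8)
The plan is to treat Algorithm~\ref{alg:1} as a stochastic gradient method applied to the PL function $F$ and to combine the descent lemma with the PL inequality~\eqref{eq:PL} and the estimator properties of Lemma~\ref{lem:grEst}. Writing $\zb(t)=[\xb(t),\blab(t)]$ and collecting the estimates $\zeta(t)$ from~\eqref{eq:grEst}, smoothness of the quadratic $F$ (with constant $L_F=2\|G\|^2$) gives $F(\zb(t+1))\le F(\zb(t))-\gamma_t\langle\nabla F(\zb(t)),\zeta(t)\rangle+\tfrac{L_F}{2}\gamma_t^2\|\zeta(t)\|^2$. Taking the conditional expectation given $\EuScript F_t$, using unbiasedness from Lemma~\ref{lem:grEst}, and invoking~\eqref{eq:PL}, I would obtain the one-step recursion
\begin{align*}
\E\{F(\zb(t+1))\mid\EuScript F_t\}\le(1-2\mu_F\gamma_t)F(\zb(t))+\tfrac{L_F}{2}\gamma_t^2\,\E\{\|\zeta(t)\|^2\mid\EuScript F_t\}.
\end{align*}

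The main obstacle is that the second moment furnished by Lemma~\ref{lem:grEst} is a quadratic function of $\zb(t)$ that is \emph{not} controlled by $F$ alone: along the directions in which $F$ is flat, namely $\ker G$ (whose affine shift is exactly the solution manifold $\{\zb:G\zb+\eb=0\}$), the estimator variance can still grow. To handle this I would decompose $\zb(t)$ relative to $\ker G$, writing the orthogonal part $w_\perp(t)$ and the parallel part $w_\parallel(t)=\Pi\zb(t)$, where $\Pi$ is the orthogonal projector onto $\ker G$. Since $\|G w_\perp\|^2\ge(\sigma_{\min}^+(G))^2\|w_\perp\|^2$, the part $w_\perp(t)$ is bounded by $F(\zb(t))/(\sigma_{\min}^+(G))^2$, so the quadratic second moment admits a bound $\E\{\|\zeta(t)\|^2\mid\EuScript F_t\}\le c_1 F(\zb(t))+c_2\|w_\parallel(t)\|^2+c_3$, with $c_3$ absorbing the $O(\delta^2)$ term. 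The key structural observation is that $\nabla F(\zb)=2G^\top(G\zb+\eb)\in\mathrm{range}(G^\top)=(\ker G)^\perp$, hence $\E\{\Pi\zeta(t)\mid\EuScript F_t\}=\Pi\nabla F(\zb(t))=0$, so $w_\parallel(t)$ is a martingale with $\E\|w_\parallel(t+1)\|^2=\E\|w_\parallel(t)\|^2+\gamma_t^2\E\|\Pi\zeta(t)\|^2$. Because $\sum_t\gamma_t^2<\infty$, a simultaneous induction (discrete Grönwall) on the coupled pair $(\E F(\zb(t)),\E\|w_\parallel(t)\|^2)$ would show both sequences stay bounded, say $\E\|w_\parallel(t)\|^2\le V$ for all $t$.

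With this boundedness the variance bound reduces, in expectation, to $\E\{\|\zeta(t)\|^2\}\le c_1 \E F(\zb(t))+C$, and the one-step recursion becomes
\begin{align*}
\E F(\zb(t+1))\le\bigl(1-2\mu_F\gamma_t+\tfrac{c_1 L_F}{2}\gamma_t^2\bigr)\E F(\zb(t))+\tfrac{L_F}{2}C\gamma_t^2.
\end{align*}
Substituting $\gamma_t=g/t$ with $2\mu_F g>1$ (guaranteed by $g>1/\mu_F$) and applying the standard recursion (Chung's) lemma yields $\E F_t=O(1/t)$, the claimed rate.

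For the almost-sure statement I would apply the Robbins--Siegmund almost-supermartingale theorem to the one-step recursion: since $\sum_t\gamma_t^2<\infty$ and $\sum_t\gamma_t=\infty$, it gives $F(\zb(t))\to F_\infty$ a.s. together with $\sum_t\gamma_t F(\zb(t))<\infty$ a.s., which forces $F_\infty=0$, so $w_\perp(t)\to 0$ a.s. Finally, since $w_\parallel(t)$ is an $L^2$-bounded martingale with $\sum_t\gamma_t^2\E\|\Pi\zeta(t)\|^2<\infty$, the martingale convergence theorem gives $w_\parallel(t)\to w_\parallel^\infty$ a.s. Combining the two limits shows $\zb(t)$ converges a.s. to a random point $\zb^*$ with $F(\zb^*)=0$; by Proposition~\ref{prop:1} its primal part $\xb^*$ is a GNE. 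I expect the variance/manifold analysis---establishing the quadratic second-moment bound and the boundedness of the parallel martingale---to be the main technical hurdle, whereas the PL-based rate and the Robbins--Siegmund argument are comparatively standard.
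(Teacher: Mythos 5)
Your proposal is correct and shares the paper's overall skeleton (descent lemma plus PL inequality, Robbins--Siegmund for almost-sure convergence, Chung's lemma for the $O(1/t)$ rate, and a $\ker G$ / $(\ker G)^\perp$ decomposition for convergence of the iterates), but it diverges from the paper at the one genuinely delicate point, and in a way worth recording. The paper simply asserts the bound $\E_t\|\zeta(t)\|^2 \le M(F(\zb(t))+\delta^2)$ and, separately, that $F$ is coercive so that $\|\zb(t)\|$ is a.s.\ bounded; both assertions are clean only when $\ker G=\{0\}$, since the estimator's second moment involves the full Lagrangian gradients $\nabla_{\xb}L_j$ (not just the blocks entering $F$) and $F=\|G\zb+\eb\|^2$ is constant along $\ker G$. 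Your substitute --- bounding the $(\ker G)^\perp$ component by $F/(\sigma_{\min}^+(G))^2$, observing that $\nabla F\in\operatorname{range}(G^\top)$ so the $\ker G$ component $w_\parallel(t)$ is a martingale with increments of size $\gamma_t\,\Pi\zeta(t)$, and running a coupled Gr\"onwall on $(\E F_t,\E\|w_\parallel(t)\|^2)$ using $\sum_t\gamma_t^2<\infty$ --- supplies exactly the boundedness needed to make the variance term summable, and is the more robust argument. Your route to convergence of the iterates is also slightly more economical: you get $w_\perp(t)\to 0$ directly from $F(\zb(t))\to 0$ via $\|Gw_\perp\|\ge\sigma_{\min}^+(G)\|w_\perp\|$, whereas the paper runs a second Robbins--Siegmund recursion on $\|\zb^r(t)\|^2$ using positive definiteness of $H=2G^\top G$ on its range; the two are equivalent in substance. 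One small ordering caveat for the almost-sure part: to invoke the paper's version of Robbins--Siegmund (Lemma~\ref{lem:RS}) on $F(\zb(t))$ you need the perturbation $\gamma_t^2(1+\|w_\parallel(t)\|^2)$ to be a.s.\ summable, so you should first extract a.s.\ boundedness of $\|w_\parallel(t)\|$ (e.g.\ by applying the supermartingale form of Robbins--Siegmund to $F(\zb(t))+\|w_\parallel(t)\|^2$) before concluding $F(\zb(t))\to 0$; your sketch states these in the reverse order but the fix is immediate.
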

\begin{proof}
 We start by noticing that the procedure~\eqref{eq:grDes} can be considered an unbiased stochastic gradient descent in respect to the gap function $F(\xb,\blab)$: 
 \begin{align}\label{eq:z}
     \zb(t+1) = \zb(t) - \gamma_t\zeta(t).
 \end{align}
 Here, as before, $\zb(t) = [\xb(t),\blab(t)]$ and $\zeta(t) = [\zeta_{\xb^1}(t), \ldots, \zeta_{\xb^n}(t),\zeta_{\blab^1}(t), \ldots, \zeta_{\blab^n}(t)]$ denotes the constructed estimation of $\nabla F$ at the point $\zb(t)$. Using the notations above, the relation~\eqref{eq:z}, as well as the quadratic structure of the function $F(\zb)$, in particular its Lipschitz smoothness with some constant $L_F>0$, we obtain
 \begin{align}\label{eq:Ineq1}
 &F(\zb(t+1)) \le F(\zb(t)) + \langle\nabla F(\zb(t)), \; \zb(t+1)-\zb(t)\rangle \cr
 &\qquad\qquad\qquad\qquad + \frac{L_F}{2}\|\zb(t+1)-\zb(t)\|^2 \cr
 &= F(\zb(t)) - \gamma_t \langle\nabla F(\zb(t)), \; \zeta(t)\rangle + \frac{L_F\gamma_t^2}{2}\|\zeta(t)\|^2. 
 \end{align}
 Next, we notice that Lemma~\ref{lem:grEst} and the quadratic structure of $F$ imply that, for some $M>0$,
 \begin{align}\label{eq:lem}
     &\E_t\{\zeta(t)\} = \nabla F(\zb(t)),\quad
     \E_t\{\|\zeta(t)\} \le M( F(\zb(t)) + \delta^2),
 \end{align} 
 where we use the notation $\E_t\{\cdot\} = \E\{\cdot\;|\;\EuScript F_t\}$
 Thus, taking the conditional expectation in the inequality~\eqref{eq:Ineq1}, we conclude that  for all $t\ge \frac{gML_F}{2\mu_F}$ almost surely
 \begin{align}\label{eq:RS}
 &\E_t\{F(\zb(t+1))\} \le F(\zb(t)) - \gamma_t \|\nabla F(\zb(t))\|^2\cr
 &\qquad\qquad\qquad\quad + \frac{ML_F\gamma_t^2}{2}( F(\zb(t)) + \delta^2)\cr
 &=\left(1 - 2\mu_F\gamma_t + \frac{ML_F\gamma_t^2}{2}\right) F(\zb(t)) + \frac{ML_F\delta^2\gamma_t^2}{2}\cr
 &\le \left(1 - \frac{\mu_F g}{t} \right) F(\zb(t)) + \frac{ML_F\delta^2g^2}{2t^2},
 \end{align}
 where the last inequality holds due to the choice of $\gamma_t = \frac{g}{t}$ and $t\ge \frac{gML_F}{2\mu_F}$. Thus, we can apply the result of Robbins and Siegmund (see Appendix~\ref{app:RS}) to~\eqref{eq:RS} and conclude that almost surely $F(\zb(t))$ converges to 0. Moreover, since $F(\zb)\to\infty$ as $\|\zb\|\to\infty$, it implies that $\|\zb(t)\|$ stays uniformly upper bounded with probability 1.  
 Furthermore, taking the full expectation in~\eqref{eq:RS}, we obtain 
 \begin{align}\label{eq:ERS}
 &\E F(\zb(t+1))\le  \left(1 - \frac{\mu_F g}{t} \right) \E F(\zb(t)) + \frac{ML_F\delta^2g^2}{2t^2},
 \end{align}
 implying that $\lim_{t\to\infty} \E F(\zb(t)) = O\left(\frac{1}{t}\right)$  (due to the choice of $g>\frac{1}{\mu_F}$ and the Chung's lemma formulated in Appendix~\ref{app:Chung}). 

 Now we turn our attention to evolvement of the iterates $\zb(t)$. For this purpose we use the expression~\eqref{eq:F} for the gap function $F$ and denote its gradient as $\nabla F(\zb) = H\zb + \lb$, where $H =2G^\top G$, $\lb =2G^\top\eb$.  
 Let $P_{rH}[\cdot]$ and $P_{kH}[\cdot]$ be the operators of orthogonal projection onto 
$\mathrm{range}(H)$ and $\ker(H)$, respectively, and 
$\zb^r(t) = P_{rH}[\zb(t)-\zb^*]$, $\zb^k(t) = P_{kH} [\zb(t)-\zb^*]$, where $\zb^* = [\xb^*, \blab^*]$ is some solution to~\eqref{eq:DOP}, which implies that $\xb^*$ is a GNE in $\Gamma$ (see Proposition~\ref{prop:1}). 
Then, due to linearity of orthogonal projection operator onto a linear subspace and the fact that $P_{rH}H[\cdot] = HP_{rH}[\cdot]$, $P_{kH}H[\cdot] = \boldsymbol{0}$,  we obtain from~\eqref{eq:z} that
\begin{align*}
\zb^r(t+1) &= \zb^r(t) - \gamma_t\big(H \zb^r(t) + P_{rH}[\zeta(t)]\big),\\
\zb^k(t+1) &= \zb^k(t) - \gamma_t\,P_{kH}[\zeta(t)].
\end{align*}
Thus, 
\begin{align*}
   \|\zb^r(t+1)\|^2 = \| \zb^r(t)\|^2 &-2\gamma_t\langle H \zb^r(t) + P_{rH}[\zeta(t)], \zb^r(t)\rangle\cr
   &+\gamma_t^2\|H \zb^r(t) + P_{rH}[\zeta(t)]\|^2.
\end{align*}
Using again~\eqref{eq:lem} and the fact that $\|\zb(t)\|$ is almost surely uniformly bounded, we conclude that
\begin{align*}
   \E_t\|\zb^r(t+1)\|^2 &= \| \zb^r(t)\|^2 -2\gamma_t\langle H \zb^r(t) , \zb^r(t)\rangle+O(\gamma_t^2)\cr
   &\le (1-2\lambda_H\gamma_t)\| \zb^r(t)\|^2 + O(\gamma_t^2),
\end{align*}
where  $\lambda_H>0$ is a minimum positive eigenvalue of $H$. In the last inequality we use the fact that the matrix $H$ is positive definite on $\mathrm{range}(H)$ and $\zb^r(t)\in \mathrm{range}(H)$. Thus, according to Lemma~\ref{lem:RS} in Appendix~\ref{app:RS}, almost surely 
$\lim_{t\to\infty}\zb^r(t) =  0$.
As for the component $\zb^k(t+1)$, 
\[
\zb^k(t+1)-\zb^k(t) = -\gamma_t P_{kH}\zeta(t),
\]
where $\{P_{kH}\zeta(t)\}$ is a martingale difference such that
$\E\{\|P_{kH}\zeta(t)\|^2 \mid \mathcal F_t\}$ is almost surely uniformly upper bounded (again due to upper bounded $\|\zb(t)\|$ almost surely).
Since $\sum_t \gamma_t^2 < \infty$, 
the series $\sum_t \gamma_t \eta_t$ converges almost surely 
(see the martingale 
convergence theorem in Appendix~\ref{app:mart}). 
Therefore $\{\zb^k(t)\}$ is a Cauchy sequence almost surely. Thus, there exists (random) $\zb^k_\infty \in \ker(H)$ such that almost surely
$\lim_{t\to\infty}\zb^k(t) =  \zb^k_\infty$.
Hence, 
$\zb(t)-\zb^*= \zb^r(t) + \zb^k(t) \to \zb^k_\infty$ almost surely as $t\to\infty$. 
It implies that almost surely 
$\zb(t)\to\zb_\infty = \zb^\star + \zb^k_\infty$ as $t\to\infty$.
Since $\zb^k_\infty \in \ker(H)=\ker(G)$, we have that $F(\zb_\infty) = \|G \zb_\infty + \eb\|^2 = \|G \zb^\star + G\zb^k_\infty + \eb\|^2 = 0$. Thus, almost surely
$\lim_{t\to\infty}\zb_t = \zb_\infty = [\xb_\infty,\blab_\infty] \in \{[\xb,\blab]:\; F(\xb,\blab) = 0\}$. In particular, $\xb_\infty$ is a GNE in $\Gamma$. 
\end{proof}
\begin{rem}
 Recall that, in zero-order optimization, the minimax lower bound for minimizing a $\mu$-strongly convex function is $\Omega(1/t)$~\cite{duchi2015optimal}.
Theorem~\ref{th:main} shows that, by leveraging the equivalence between the GNE computation problem and a convex optimization problem satisfying the PL condition, one obtains an $O(1/t)$ convergence rate for quadratic games with linearly coupled equality constraints.
Since every $\mu$-strongly convex function satisfies the PL condition, the strongly convex class is strictly contained in the PL class. Consequently, the obtained $O(1/t)$ rate matches the known minimax lower bound (up to dimension-dependent constants) and is therefore asymptotically optimal for this broader problem class.
\end{rem}

\begin{rem}
Theorem~\ref{th:main} establishes convergence to a generalized Nash equilibrium (GNE)
in quadratic, generally non-monotone games with individual coupling linear
\emph{equality} constraints.
It is worth noting that this result cannot be directly extended to the case of
\emph{inequality} constraints.
Indeed, if the equality constraints $A_i x - b_i = 0$ are replaced by the
inequality constraints $A_i x - b_i \le 0$, 
the optimality conditions in Proposition~\ref{prop:1} are reformulated as
follows (see again~\cite[Theorem~8]{facchinei2007generalized}):
\begin{align}\label{eq:GNE_ineq}
    &\nabla_{\xb^i} J_i(\xb^*) + A_{i(i,:)}^\top \blab^{*,i} = \boldsymbol{0}, \\
    &(\blab^{*,i})^\top (A_i \xb^* - \bb_i) = 0, \quad \forall i\in [n].
\end{align}
The corresponding optimization problem can be, thus, formulated as
\begin{align}\label{eq:NP}
    &\min_{\xb,\, \blab}\;
        \sum_{i=1}^n
        \big\|
            \nabla_{\xb^i} J_i(\xb)
            + \rb_i^{i}
            + A_{i(i,:)}^\top \blab_i
        \big\|^2, \cr
    &\text{s.t.}\quad
        (\blab^{i})^\top (A_i \xb - \bb_i) = 0,
        \quad \forall i \in [n].
\end{align}
The distributed structure of this formulation is preserved,
and the zero-order gradient estimation methods proposed in this work
can still be applied.
However, the resulting problem is no longer convex.
The optimization problem~\eqref{eq:NP} falls under the class of
\emph{Linear Complementarity Quadratic Programs (LCQPs)},
for which convergence can, in general, only be guaranteed to a
stationary point~\cite{Hall2021SequentialConvexProgrammingLCQP}.
\end{rem}

\section{A numerical example}
In this section, we demonstrate the efficacy of the proposed Algorithm~$1$ with an example. Consider a GNE problem in the \textit{non-monotone} game $\Gamma(2,\{C_1, C_2\},\{J_1, J_2\})$ with  cost functions $J_i(\xb^i,\xb^{-i}) = \frac{1}{2}\xb^TQ_i\xb + \rb_i^T\xb + k_i, \; \forall i \in [2]$ and the local coupled sets $C_i$ defined by the linear equality constraints $A_i\xb-\bb_i = \boldsymbol{0}, \; \forall i \in [2]$, where 
\begin{align*}
    & Q_1 = \begin{bmatrix}
        7 & 1 & 1 & 0 \\
        1 & 7 & 0 & 1 \\
        1 & 0 & 7 & 1 \\
        0 & 1 & 1 & 7
    \end{bmatrix}, \quad 
    r_1 = \begin{bmatrix}
       -12 \\
       -19 \\
       -26 \\
       -33
    \end{bmatrix}, \quad 
    k_1 = 0, \\ \\
    & A_1 = \begin{bmatrix} 
            1 & 0 & 1 & 0\\
            1 & 1 & 0 & 0
        \end{bmatrix}, \quad
    b_1 = \begin{bmatrix}
        4 \\
        3
    \end{bmatrix}, \\ \\
    & Q_2 = \begin{bmatrix}
        7 & 0 & -3.5 & 1 \\
        0 & 7 & 1 & 0 \\
        -3.5 & 1 & 7 & 0 \\
        1 & 0 & 0 & 7
    \end{bmatrix}, \quad 
    r_2 = \begin{bmatrix} 
        -8 \\
       -17 \\
       -17 \\
       -29
    \end{bmatrix}, \quad
    k_2 = 0, \\ \\
    & A_2 = \begin{bmatrix} 
            1 & 1 & 1 & 1
        \end{bmatrix}, \quad 
    b_2 = 10.
\end{align*}
The GNE of this game exists and is given by $x^\ast = (1,2,3,4)^\top $. Together with $\lambda^\ast = (0,0,0)^\top$, it satisfies~\eqref{eq:GNE}. Therefore, we are in a setting in which Assumptions~\ref{assum:standing} and \ref{assum:exist} hold. 
To search for this GNE, Algorithm~$1$ was implemented with $T = 10000$ iterations, $\sigma = 0.05$ and $\delta = 0.05$. The iterates $\boldsymbol{x}(t)$ and $\boldsymbol{\lambda}(t)$ were initialized at zero. We set up the following step sizes\footnote{Individual step sizes of the same order in respect to $t$ are justified, see \cite{tat_kam_TAC} for further details.} of the order $O(1/t)$ for the components of $\boldsymbol{x}$ and $\boldsymbol{\lambda}$:  
$\gamma_{x_1^1}(t) = 0.006/(t+500)$, $\gamma_{x_1^2}(t) = 0.005/(t+500)$, $\gamma_{x_2^1}(t) = 0.015/(t+500)$, $\gamma_{x_2^2}(t) = 0.009/(t+500)$, $\gamma_{\lambda^1_1}(t) =\gamma_{\lambda^2_1}(t)= \gamma_{\lambda^1_2}(t) = 0.001/(t+1000)$. 
For the centralized first-order/gradient-based approach, a constant learning rate of $\gamma_x(t) = 0.001$ and $\gamma_\lambda(t) = 0.005$ was chosen.  As depicted in Figure \ref{Fig1}, both methods converge the GNE as time runs. The theoretic investigations are supported by the simulations: the first-order method exhibits exponential convergence (see Remark~\ref{rem:1oa}), whereas the zero-order procedure, as expected, converges more slowly and displays an $O(1/t)$ rate (Theorem~\ref{th:main}). 


\begin{figure}[h]
\centering
\includegraphics[width=0.45\textwidth]{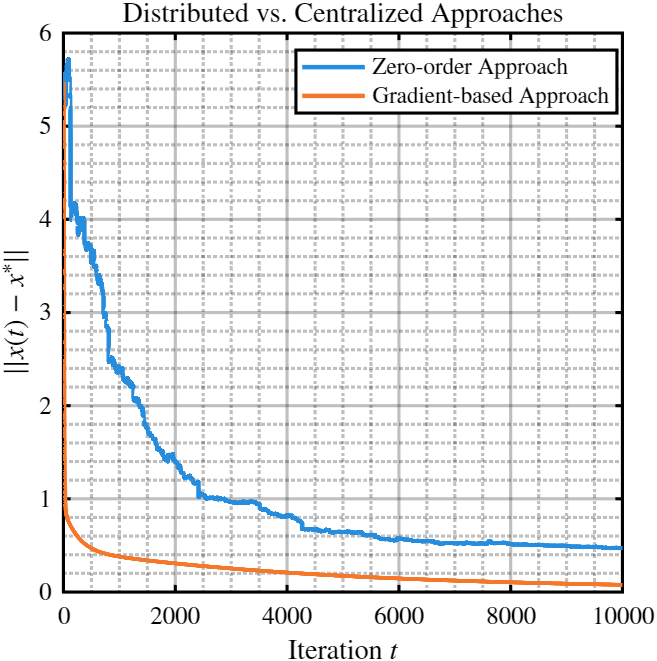} 
\caption{Convergence of Algorithm~$1$}
\label{Fig1}
\end{figure}

\section{Conclusion}
We investigated a class of generalized Nash equilibrium problems with quadratic costs and individual linear equality constraints. By reformulating the KKT conditions into a convex optimization problem with a PL-structured objective, we enabled a distributed first-order method that achieves global linear convergence to a GNE without requiring strong monotonicity or shared constraints. For scenarios where gradient information is unavailable, we proposed a fully distributed zero-order scheme based solely on local cost evaluations and constraint residuals. Future work will explore extending the continuous reformulation of GNE problems to games with general inequality constraints.
\appendix

\subsection{Robbins--Siegmund convergence theorem}\label{app:RS}
\begin{lem}[Lemma 11 in \cite{polyak}]\label{lem:RS}
Let $(V_t)_{t\ge 0}$ be a sequence of nonnegative random variables generating 
sigma-algebras  $(\mathcal F_t)_{t\ge 0}$. 
Suppose there exist sequences of nonnegative random variables 
$\{a_t\}$, $\{b_t\}$, $\{c_t\}$ such that almost surely
\[
\E\{V_{t+1}\mid \EuScript F_t\} \;\le\; (1-a_t) V_t + b_t + c_t,
\qquad t\ge 0,
\]
with
$\sum_{t=0}^\infty a_t = \infty$, $\sum_{t=0}^\infty b_t < \infty$, and  
$\sum_{t=0}^\infty c_t < \infty$.
Then  $V_t \to 0$ almost surely.
\end{lem}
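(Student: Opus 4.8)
The plan is to recognize this as the special case of the Robbins--Siegmund almost-supermartingale convergence theorem in which the multiplicative perturbation is absent, and to derive it from the classical nonnegative supermartingale convergence theorem. First I would collapse the two perturbation terms into one: since $b_t,c_t\ge 0$ and both are a.s.\ summable, I set $d_t=b_t+c_t$, so that $\sum_t d_t<\infty$ a.s., and the hypothesis becomes $\E\{V_{t+1}\mid\mathcal F_t\}\le(1-a_t)V_t+d_t\le V_t+d_t$, where the last bound uses $a_tV_t\ge 0$. This already shows $V_t$ is an almost-supermartingale, and the whole difficulty is to upgrade ``almost'' to genuine convergence and then to pin the limit at $0$.

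The core step is to manufacture an honest nonnegative supermartingale by absorbing the summable tail. Writing $\rho_t=\sum_{s\ge t}d_s$ (finite a.s.), I would define $Y_t=V_t+\rho_t\ge 0$ and verify $\E\{Y_{t+1}\mid\mathcal F_t\}=\E\{V_{t+1}\mid\mathcal F_t\}+\rho_{t+1}\le Y_t-a_tV_t\le Y_t$, where the telescoping identity $d_t+\rho_{t+1}=\rho_t$ is exactly what cancels the perturbation. Hence $(Y_t)$ is a nonnegative supermartingale and converges a.s.\ to a finite limit; since $\rho_t\to 0$, this forces $V_t\to V_\infty$ a.s.\ for some finite $V_\infty\ge 0$. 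To additionally extract the summable ``gain'' term I would note that $Z_t=Y_t+\sum_{s<t}a_sV_s$ is again a nonnegative supermartingale (the added increment $a_tV_t$ is precisely the slack left in the inequality $\E\{Y_{t+1}\mid\mathcal F_t\}\le Y_t-a_tV_t$), so $Z_t$ converges a.s.\ as well; combined with the convergence of $Y_t$ this yields $\sum_t a_tV_t<\infty$ a.s.

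Finally I would close the argument by contradiction on the event $\{V_\infty>0\}$: there $V_t\ge V_\infty/2>0$ for all sufficiently large $t$, whence $\sum_t a_tV_t\ge(V_\infty/2)\sum_{t\ge T}a_t=\infty$ because $\sum_t a_t=\infty$, contradicting $\sum_t a_tV_t<\infty$. Therefore $V_\infty=0$ a.s., i.e.\ $V_t\to 0$ a.s., which is the assertion.

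The main obstacle I anticipate is technical rather than conceptual: justifying the integrability and measurability needed to invoke the supermartingale convergence theorem when $a_t,b_t,c_t$ are merely $\mathcal F_t$-measurable random variables, so that $\rho_t$ is well defined and $(Y_t),(Z_t)$ are genuine integrable supermartingales on the full-measure event where $\sum_t d_t<\infty$. In the use made of this lemma in the excerpt, the sequences $a_t=\mu_Fg/t$ and $b_t+c_t=O(1/t^2)$ are deterministic and $V_0=F(\zb(0))$ is a constant, so all measurability and integrability subtleties evaporate and only the two summability conditions $\sum_t a_t=\infty$ and $\sum_t(b_t+c_t)<\infty$ are actually needed.
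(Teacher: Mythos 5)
Your argument is sound, and it is worth noting at the outset that the paper itself contains no proof of Lemma~\ref{lem:RS}: the statement is imported verbatim (as Lemma~11 of Polyak's book) and used as a black box, so there is no in-paper derivation to compare against. What you have reconstructed is essentially the standard proof of this corollary of the Robbins--Siegmund almost-supermartingale theorem: merge $b_t,c_t$ into $d_t$, build a nonnegative supermartingale by compensating with the summable perturbation, extract both $V_t\to V_\infty$ and $\sum_t a_t V_t<\infty$, and then kill the event $\{V_\infty>0\}$ using $\sum_t a_t=\infty$. That last contradiction step is exactly right and is the only place the divergence hypothesis enters.

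The one genuine weak point is the measurability issue you flag in your final paragraph, and it is worth being precise about why your specific construction fails there rather than calling it merely technical. The lemma as stated allows $a_t,b_t,c_t$ to be \emph{random} (presumably $\mathcal F_t$-measurable), in which case the tail $\rho_t=\sum_{s\ge t}d_s$ is not $\mathcal F_t$-measurable, $Y_t=V_t+\rho_t$ is not adapted, and your key identity $\E\{Y_{t+1}\mid\mathcal F_t\}=\E\{V_{t+1}\mid\mathcal F_t\}+\rho_{t+1}$ is simply false in general: the right-hand side should carry $\E\{\rho_{t+1}\mid\mathcal F_t\}$, and the telescoping $d_t+\rho_{t+1}=\rho_t$ no longer cancels inside the conditional expectation. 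The standard repair compensates with the adapted \emph{past} instead of the non-adapted future: set $X_t=V_t+\sum_{s<t}\bigl(a_sV_s-d_s\bigr)$, which is a supermartingale but only bounded below by $-\sum_{s}d_s$; then localize with the stopping times $\tau_K=\inf\{t:\sum_{s\le t}d_s>K\}$ (legitimate because $\sum_{s\le t}d_s$ is $\mathcal F_t$-measurable), so that $X_{t\wedge\tau_K}+K$ is a nonnegative supermartingale, and let $K\to\infty$ to cover the almost-sure event $\{\sum_t d_t<\infty\}$. This recovers exactly your two conclusions $V_t\to V_\infty$ and $\sum_t a_tV_t<\infty$, after which your contradiction argument goes through verbatim. (A similar remark applies to integrability: if $\E V_0$ is not finite one runs the same argument conditionally on $\mathcal F_0$ or after truncation.) As you correctly observe, in the only use the paper makes of the lemma --- inequality~\eqref{eq:RS} in the proof of Theorem~\ref{th:main}, where $a_t=\mu_F g/t$ and $b_t+c_t=O(1/t^2)$ are deterministic --- your construction is airtight as written.
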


\subsection{Chung's lemma}\label{app:Chung}
\begin{lem}[Lemma 4 in \cite{polyak}]\label{lem:chung}
Suppose that $\{u_t\}$, $t\ge1$, is a sequence of real numbers such that 
$u_{t+1}\le\left(1-\frac{c}{t}\right)u_t+\frac{d}{t^{p+1}}$, for $t\ge n_0$,
where $c>0$, $d>0$, $p>0$. Then, if $c>p$,
$u_t = O\left(\frac{1}{t^p}\right)$.
\end{lem}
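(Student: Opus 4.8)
The plan is to prove the stronger pointwise bound $u_t \le K/t^p$ for all sufficiently large $t$, with a single constant $K>0$ fixed once and for all; this immediately yields $u_t = O(1/t^p)$. The natural device is an induction with the comparison ansatz $K/t^p$, and the structural role of the hypothesis $c>p$ will surface precisely in the step that closes the induction.

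First I would fix a starting index $N = \max\{n_0, \lceil c\rceil\}$, so that $1 - c/t \ge 0$ for every $t \ge N$; this sign condition is what lets the contraction factor preserve inequalities when multiplied through, even if some $u_t$ happen to be negative. Next I would record the elementary inequality $(1 + 1/t)^{-p} \ge 1 - p/t$ (valid for $t>0$, $p>0$: the function $x \mapsto (1+x)^{-p} - (1-px)$ vanishes at $x=0$ and has nonnegative derivative on $x>0$), which, after multiplying by $K/t^p>0$, provides the lower bound $K/(t+1)^p \ge K/t^p - pK/t^{p+1}$ that I will compare against.

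With these in hand, the inductive step reads as follows. Assuming $u_t \le K/t^p$ and using $1-c/t \ge 0$, the recursion gives
\[
u_{t+1} \le \left(1 - \frac{c}{t}\right)\frac{K}{t^p} + \frac{d}{t^{p+1}} = \frac{K}{t^p} - \frac{cK - d}{t^{p+1}}.
\]
Comparing with the lower bound for $K/(t+1)^p$, the desired inequality $u_{t+1}\le K/(t+1)^p$ holds as soon as $cK - d \ge pK$, i.e. $K \ge d/(c-p)$. This is exactly where $c>p$ is indispensable: it makes $c-p>0$, so the threshold $d/(c-p)$ is a finite positive number. I would therefore set $K = \max\{d/(c-p),\, N^p u_N\}$, which is positive and simultaneously validates the base case $u_N \le K/N^p$ (automatic when $u_N\le 0$, and forced by $K\ge N^p u_N$ otherwise) while keeping $K \ge d/(c-p)$ throughout the induction.

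Finally, the induction delivers $u_t \le K/t^p$ for all $t \ge N$, and since $K$ is a fixed constant independent of $t$, this is precisely the claim $u_t = O(1/t^p)$. The only genuinely delicate point is the choice of the comparison sequence together with verifying that the Bernoulli-type inequality and the condition $c>p$ let the induction close; once the threshold $K \ge d/(c-p)$ is identified, the remaining arithmetic is routine, and the base case is absorbed into the same constant $K$.
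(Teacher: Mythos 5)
The paper does not actually prove this lemma: it is quoted verbatim as Chung's lemma (Lemma~4 in \cite{polyak}) and used as a black box in the proof of Theorem~\ref{th:main}, so there is no in-paper argument to compare against. Your proof is correct and is essentially the classical argument for Chung's lemma: induction on the comparison ansatz $u_t \le K/t^p$, with the Bernoulli-type bound $(1+1/t)^{-p} \ge 1-p/t$ translating into $K/(t+1)^p \ge K/t^p - pK/t^{p+1}$, and the induction closing exactly when $K(c-p)\ge d$, which is where $c>p$ enters. You also handle the two points where such proofs usually leak: you start at $N=\max\{n_0,\lceil c\rceil\}$ so that $1-c/t\ge 0$ and the inductive hypothesis can be multiplied through even when some $u_t$ are negative, and you fold the base case into $K=\max\{d/(c-p),\,N^p u_N\}$. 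One remark worth noting: your conclusion is the one-sided bound $u_t\le K/t^p$ rather than a bound on $|u_t|$, but this matches precisely the paper's stated (one-sided) definition of $O(\cdot)$ in the Notations section, and it is all that is needed in the application, where the quantity being bounded is the nonnegative gap $\E F_t$.
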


\subsection{Martingale convergence theorem}\label{app:mart}
\begin{lem}[Lemma 2.4.1 in \cite{Kushner}]\label{lem:mart}
Let $(\mathcal F_t)_{t\ge 0}$ be a filtration of sigma algebras, i.e. $\mathcal F_t\subseteq \mathcal{F}_{t+1}$ for all $t\ge 0$, and let $(\zeta_t)_{t\ge 1}$ be a martingale–difference sequence:
$\mathbb E[\zeta_t\mid \mathcal F_{t-1}]=0$ almost surely, with finite conditional second moments
$\mathbb E[\zeta_t^2\mid \mathcal F_{t-1}]<\infty$ almost surely. 
Let $(a_t)_{t\ge 1}$ be a real sequence such that
$\sum_{t=1}^\infty a_t^2 \;<\;\infty$.
Then the series $\sum_{t=1}^\infty a_t \zeta_t<\infty$ almost surely.
\end{lem}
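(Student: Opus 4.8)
The plan is to exhibit the weighted partial sums as a square-integrable martingale and to prove their almost-sure convergence through the associated predictable quadratic variation, reducing the general case to Doob's $L^2$ theorem by a localization argument. Set $M_0 = 0$ and $M_n = \sum_{t=1}^n a_t \zeta_t$, adapted to $(\mathcal F_n)$. Since $\mathbb{E}[a_{n+1}\zeta_{n+1}\mid\mathcal F_n] = a_{n+1}\,\mathbb{E}[\zeta_{n+1}\mid\mathcal F_n] = 0$ by the martingale-difference hypothesis and the determinism of $a_{n+1}$, the pair $(M_n,\mathcal F_n)$ is a martingale. Almost-sure convergence of $M_n$ to a finite limit is exactly the assertion that $\sum_t a_t\zeta_t$ converges, so this is the target.

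First I would record the Doob decomposition of the submartingale $M_n^2$: the process $M_n^2 - \langle M\rangle_n$ is a martingale, where the predictable, increasing compensator is $\langle M\rangle_n = \sum_{t=1}^n a_t^2\,\mathbb{E}[\zeta_t^2\mid\mathcal F_{t-1}]$. Here I would invoke the hypothesis on the conditional second moments: when these are almost surely uniformly bounded by a constant $C$ — as is precisely the case in the application, where $\mathbb{E}\{\|P_{kH}\zeta(t)\|^2\mid\mathcal F_t\}$ is a.s.\ uniformly bounded — together with $\sum_t a_t^2 < \infty$, one gets $\langle M\rangle_\infty = \sum_{t=1}^\infty a_t^2\,\mathbb{E}[\zeta_t^2\mid\mathcal F_{t-1}] \le C\sum_t a_t^2 < \infty$ almost surely.

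The core step passes from a.s.\ finiteness of $\langle M\rangle_\infty$ to a.s.\ convergence of $M_n$ by stopping. For each $K>0$ define $\tau_K = \inf\{n\ge 0 : \langle M\rangle_{n+1} > K\}$; this is a genuine stopping time because $\langle M\rangle_{n+1}$ is $\mathcal F_n$-measurable (the summand at index $n{+}1$ is $\mathbb{E}[\zeta_{n+1}^2\mid\mathcal F_n]$). The stopped process $M^{\tau_K}_n = M_{n\wedge\tau_K}$ is again a martingale, and optional stopping applied to $M_n^2 - \langle M\rangle_n$ yields $\mathbb{E}[(M^{\tau_K}_n)^2] = \mathbb{E}[\langle M\rangle_{n\wedge\tau_K}] \le K$ for all $n$, since $\langle M\rangle_{n\wedge\tau_K}\le K$ by construction. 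Hence $M^{\tau_K}$ is $L^2$-bounded and converges almost surely by Doob's $L^2$ martingale convergence theorem. Finally, since $\{\langle M\rangle_\infty < \infty\} = \bigcup_{K}\{\tau_K = \infty\}$ and on $\{\tau_K = \infty\}$ one has $M^{\tau_K}_n = M_n$ for every $n$, the martingale $M_n$ converges almost surely on $\{\langle M\rangle_\infty < \infty\}$, which is an event of full probability.

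The main obstacle is the gap between the bare hypothesis — conditional second moments merely finite — and what convergence actually requires, namely a.s.\ finiteness of the conditional-variance-weighted series $\sum_t a_t^2\,\mathbb{E}[\zeta_t^2\mid\mathcal F_{t-1}]$. The localization by $\langle M\rangle$ is the device that bridges this gap cleanly: it reduces the claim to the $L^2$-bounded regime, where Doob's theorem applies verbatim, and it makes explicit that the only quantitative input needed is the summability of the conditional variances against $a_t^2$, supplied here by the uniform bound on the conditional second moments available in the application. The remaining verifications (that $M_n$ is a martingale, that $M_n^2-\langle M\rangle_n$ is a martingale, and that $\tau_K$ is a stopping time) are routine.
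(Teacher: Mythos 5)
The paper offers no proof of this lemma---it is quoted verbatim from Kushner's book (Lemma 2.4.1 of \cite{Kushner})---so your proposal cannot be compared against an in-paper argument; it can only be judged on its own merits, and on those it is essentially sound. Your localization argument (stop the martingale $M_n=\sum_{t\le n}a_t\zeta_t$ when the predictable quadratic variation $\langle M\rangle_n=\sum_{t\le n}a_t^2\,\mathbb{E}[\zeta_t^2\mid\mathcal F_{t-1}]$ is about to exceed $K$, apply Doob's $L^2$ convergence theorem to the stopped process, and take the union over $K$) is the standard proof of the local martingale convergence theorem and is, in substance, the route Kushner's text takes. You also correctly identify a real imprecision in the statement as the paper records it: almost-sure \emph{finiteness} of each conditional second moment is not enough (take $\mathbb{E}[\zeta_t^2\mid\mathcal F_{t-1}]$ growing like $a_t^{-4}$); what the argument needs is $\langle M\rangle_\infty<\infty$ almost surely, which in the paper's application is supplied by the almost-sure uniform bound on $\mathbb{E}\{\|P_{kH}\zeta(t)\|^2\mid\mathcal F_t\}$ together with $\sum_t\gamma_t^2<\infty$. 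Your proof makes this dependence explicit, which is a genuine improvement over the bare citation.

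One technical repair: you should not invoke the global Doob decomposition ``$M_n^2-\langle M\rangle_n$ is a martingale'' before stopping, since that requires $M_n\in L^2(\mathbb{P})$, i.e.\ unconditional square-integrability of the $\zeta_t$, which the hypotheses do not grant. Instead, establish the identity $\mathbb{E}\bigl[M_{n\wedge\tau_K}^2\bigr]=\mathbb{E}\bigl[\langle M\rangle_{n\wedge\tau_K}\bigr]\le K$ directly: write
\begin{align*}
M_{n\wedge\tau_K}=\sum_{t=1}^{n}a_t\zeta_t\,\mathbf 1\{\tau_K\ge t\},
\end{align*}
note that $\mathbf 1\{\tau_K\ge t\}=\mathbf 1\{\langle M\rangle_t\le K\}$ is $\mathcal F_{t-1}$-measurable (this is exactly why you defined $\tau_K$ through $\langle M\rangle_{n+1}$), so the cross terms vanish under conditioning and each diagonal term is bounded because stopping before level $K$ caps the accumulated conditional variance. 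This makes the stopped process honestly $L^2$-bounded without ever assuming integrability of the unstopped $M_n^2$, after which Doob's theorem and your union argument over $\{\tau_K=\infty\}$ go through exactly as you wrote them.
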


\subsection{More details on~\eqref{eq:zeta1}}\label{app:details}
We rewrite~\eqref{eq:zeta} using the next terms: 
\begin{align*}
    \zeta_{\xb^i}
=&\sum_{j=1}^n\!\underbrace{\langle\nb_j,\eta\rangle\langle\pb_j,\eta\rangle(\|\eta^j\|^2-d)}_{=:~T_{1,j}}\;\xi_x^i\cr
&+\frac{\delta}{2}\sum_{j=1}^n\!\underbrace{\langle\pb_j,\eta\rangle^2(\|\eta^j\|^2-d)}_{=:~T_{2,j}}\;\xi_x^i
+\sum_{j=1}^n\!\underbrace{\langle\qb_j,\eta\rangle\eta^i}_{=:~T_{3,j}},
\end{align*}
To obtain~\eqref{eq:zeta1}, we use the following fact: if $u\sim\mathcal N(0,I_{nd})$ and $P_j=e_j e_j^\top\otimes I_d$ is the projector onto the $j$-th $d$-block, then for any $a,b\in\mathbb{R}^{nd}$,
\begin{align}
\mathbb{E}\!\big[\langle a,u\rangle\,\langle b,u\rangle\,\|u^j\|^2\big]
&= a^\top(dI_{nd}+2P_j)b, \label{eq:key1}\\
\mathbb{E}\!\big[\langle a,u\rangle\,\langle b,u\rangle\,(\|u^j\|^2-d)\big]
&= 2\,a^\top P_j b, \label{eq:key2}\\
\mathbb{E}\!\big[\langle b,u\rangle^2\,(\|u^j\|^2-d)\big]
&= 2\,b^\top P_j b \;=\; 2\,\|P_j b\|^2, \label{eq:key3}\\
\mathbb{E}\!\big[\langle q,u\rangle\,u^i\big]
&= P_i q. \label{eq:key4}
\end{align}
Next, we condition $T_{1,j}$ on $(\xi_x,\xi_\lambda)$, so that $(\nb_j,\pb_j,\qb_j,\xi_x^i)$ are deterministic with respect to the
inner Gaussian $\eta$. Then $
\mathbb{E}\big[T_{1,j}\mid \xi_x,\xi_\lambda\big]=2\,\langle\nb_j,P_j\pb_j\rangle
$ for each $j$.
Multiplying by $\xi_x^i$ and summing over $j$ gives
\[
\sum_{j=1}^n \mathbb{E}[T_{1,j}\mid \xi_x,\xi_\lambda]\;\xi_x^i
=2\sum_{j=1}^n\langle\nb_j,P_j\pb_j\rangle\,\xi_x^i.
\]
 Applying \eqref{eq:key3} to the second sum of $T_{2,j}$, we obtain
$\mathbb{E}\big[\langle\pb_j,\eta\rangle^2(\|\eta^j\|^2-d)\big]
=2\,\|\!P_j\pb_j\|^2$.
Hence,
\[
\frac{\delta}{2}\sum_{j=1}^n \mathbb{E}[T_{2,j}\mid \xi_x,\xi_\lambda]\;\xi_x^i
=\delta\sum_{j=1}^n \|P_j\pb_j\|^2\;\xi_x^i = \mathrm{Pol}_2(\xi_x,\xi_\lambda) \xi_x^i.
\]
Since $\pb_j$ is \emph{linear} in $(\xi_x,\xi_\lambda)$ (it arises from the outer shift of the affine
gradient), the map $(\xi_x,\xi_\lambda)\mapsto \sum_j\|P_j\pb_j\|^2$ is a homogeneous quadratic polynomial,
which we denote by $\mathrm{Pol}_2(\xi_x,\xi_\lambda)$.

Finally, applying \eqref{eq:key4} to the third sum  of $T_{3,j}$, we get
\[
\mathbb{E}\big[T_{3,j}\mid \xi_x,\xi_\lambda\big]
=\mathbb{E}\big[\langle\qb_j,\eta\rangle\,\eta^i\big]
=P_i \qb_j.
\]
Summing over $j$ yields $\sum_{j=1}^n P_i\qb_j$, which equals
$\nabla_{\xb^i}\sum_{j=1}^n c_j(\xb)$ by the definition of $\qb_j$.
Thus, adding the three contributions above, we conclude the desired expression~\eqref{eq:zeta1}.
\bibliographystyle{plain}
\bibliography{srtrMonGames_ref}
\end{document}